\numberwithin{equation}{section}
\newtheorem{theorem}{Theorem}[section]
\newtheorem{lemma}[theorem]{Lemma}
\newtheorem{corollary}[theorem]{Corollary}
\newtheorem{proposition}[theorem]{Proposition}
\theoremstyle{definition}
\newtheorem{remark}[theorem]{Remark}
\crefname{section}{Section}{Sections}
\crefname{appendix}{Appendix}{Appendices}
\crefname{theorem}{Theorem}{Theorems}
\crefname{lemma}{Lemma}{Lemmas}
\crefname{corollary}{Corollary}{Corollaries}			
\crefname{proposition}{Proposition}{Propositions}	
\crefname{claim}{Claim}{Claims}
\crefname{conjecture}{Conjecture}{Conjectures}			
\crefname{definition}{Definition}{Definitions}
\crefname{problem}{Problem}{Problems}
\crefname{example}{Example}{Examples}
\crefname{remark}{Remark}{Remarks}
\crefname{figure}{Figure}{Figures}
\crefname{footnote}{Footnote}{Footnotes}
\crefname{equation}{}{}
\crefname{enumi}{}{}
\newcommand{\QED}{\hfill \ensuremath{\Box}}
\newcommand{\R}{\mathbb{R}}
\newcommand{\Q}{\mathbb{Q}}
\newcommand{\N}{\mathbb{N}}
\newcommand{\ld}{,\ldots,}
\newcommand{\ep}{\varepsilon}
\newcommand{\norm}[1]{\left\|#1\right\|}
\newcommand{\D}{\displaystyle}
\newfont{\bg}{cmr9 scaled\magstep2}
\newcommand{\bigzerol}{\smash{\lower1.0ex\hbox{\bg 0}}}
\newcommand{\al}{\alpha}
\DeclareMathOperator{\rank}{rank}
\DeclareMathOperator{\id}{id}
\DeclareMathOperator{\supp}{supp}
\title[
Non-density of stable mappings on non-compact manifolds
]
{
Non-density of stable mappings 
\\ 
on non-compact manifolds
}
\author{Shunsuke Ichiki
}
\address{
Department of Mathematical and Computing Science,
School of Computing,
Tokyo Institute of Technology,
Tokyo 152-8552,
Japan}
\email{ichiki@c.titech.ac.jp}
\begin{document}
\date{}
\begin{abstract}
Around 1970, Mather established a significant theory on the stability of $C^\infty$ mappings and gave a characterization of the density of proper stable mappings in the set of all proper mappings.
The result yields a characterization of the density of stable mappings in the set of all mappings in the case where the source manifold is compact.
The aim of this paper is to complement Mather's result.
Namely, we show that the set of stable mappings in the set of all mappings is never dense if the source manifold is non-compact.
Moreover, as a corollary of Mather's result and the main theorem of this paper, we give a characterization of the density of stable mappings in the set of all mappings in the case where the source manifold is not necessarily compact.
\end{abstract}
\subjclass[2020]{58K25, 58K30}
\keywords{stable mapping, Whitney $C^\infty$ topology, strong conjecture} 
\maketitle
\noindent

\section{Introduction}\label{sec:intro}
In the middle of the twentieth century, Whitney conjectured that the set consisting of all stable mappings would always be dense in the appropriate space of $C^\infty$ mappings, and this conjecture came to be known as the ``strong conjecture'' (see for example  \cite{Levine1971}).
However, Thom showed that the set is not necessarily dense in all pairs of dimensions of manifolds. 
Then, around 1970, in a celebrated series  \cite{Mather1968,Mather1968b,Mather1969,Mather1969b,Mather1970,Mather1971}, Mather established a significant theory on the stability of $C^\infty$ mappings and gave a characterization of the density of proper stable mappings in the set of all proper mappings (see \cref{thm:mather}).

In what follows, 
unless otherwise stated, all manifolds and mappings belong to class $C^\infty$,  
and all manifolds are without boundary and assumed to have a countable basis.
For manifolds $N$ and $P$, 
we denote the space of all mappings of $N$ into $P$ (resp., the space of all proper mappings of $N$ into $P$) with the Whitney $C^\infty$ topology by $C^\infty(N,P)$ (resp., $C^\infty_{pr}(N,P)$).
For the definition of Whitney $C^\infty$ topology, 
see for example \cite{Golubitsky1973}.
\begin{theorem}[\cite{Mather1971}]\label{thm:mather}
Let $N$ and $P$ be manifolds of dimensions $n$ and $p$, respectively.
Then, the set consisting of all proper stable mappings is dense in $C^\infty_{pr}(N,P)$ if and only if the pair $(n,p)$ satisfies one of the following conditions.
\begin{enumerate}[$(1)$]
\item 
$n<\frac{6}{7}p+\frac{8}{7}$ and $p-n\geq 4$
\item 
$n<\frac{6}{7}p+\frac{9}{7}$ and $3\geq p-n\geq 0$
\item 
$p<8$ and $p-n=-1$
\item 
$p<6$ and $p-n=-2$
\item 
$p<7$ and $p-n\leq-3$
\end{enumerate}
\end{theorem}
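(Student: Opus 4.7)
The plan is to follow Mather's strategy, splitting the equivalence into the two implications and using as the main engine his theorem that, for compact source, stability is equivalent to infinitesimal stability, together with the Multi-Jet Transversality Theorem. The common framework is a stratification of the multi-jet spaces ${}_sJ^r(N,P)$ by $\mathcal{A}$-orbits (equivalently, by $\mathcal{K}$-orbits on the source side after fixing a target): stable multi-germs form an open stratum whose complement decomposes into finitely many locally closed strata of well-defined codimension. Conditions (1)--(5) will turn out to be exactly those for which this complement has codimension strictly greater than $sn$ in ${}_sJ^r(N,P)$ for every multiplicity $s$ and all sufficiently large $r$; these are Mather's \emph{nice dimensions}.

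For the ``if'' direction, fix $(n,p)$ satisfying one of (1)--(5). Enumerate the $\mathcal{A}$-orbits of multi-germs $(\C^n,S)\to(\C^p,0)$ up to some jet order $r$ large enough that $r$-determinacy holds for every stable orbit (possible because $N$ is compact and only finitely many multiplicities matter). The non-stable strata all have codimension $>sn$ by the codimension estimates verified under (1)--(5). By Mather's Multi-Jet Transversality Theorem, the set of $f\in C^\infty(N,P)$ whose $s$-fold multi-jet extension is transverse to each of these strata is residual, hence dense. Transversality to a stratum of codimension $>sn$ from a source of dimension $sn$ (the off-diagonal part of $N^s$) forces the image to miss the stratum, so every multi-germ of such an $f$ is stable; infinitesimal stability then gives global stability on the compact $N$.

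For the ``only if'' direction, assume $(n,p)$ violates all of (1)--(5). The task is to produce a nonempty open set $U\subseteq C^\infty(N,P)$ in which no map is stable. The mechanism is the appearance of an unavoidable \emph{modulus}: at each borderline pair one exhibits an explicit germ of $\mathcal{A}_e$-codimension exceeding $n$ (for $p-n$ small one uses umbilic-type germs, for $p-n$ negative one uses the well-known unimodal families from Mather's classification) whose local stratum in the appropriate multi-jet space has codimension $\leq sn$. Multi-jet transversality then shows that a whole open neighborhood of any map carrying such a germ continues to carry germs in the same one-parameter modular family, and since distinct parameter values are $\mathcal{A}$-inequivalent, none of these maps can be stable.

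The principal obstacle, and the real content of Mather's paper, is the codimension bookkeeping that produces the sharp thresholds appearing in (1)--(5): one must classify all simple stable germs, identify the first modular stratum in each range of $p-n$, and compute its codimension exactly, which is where the peculiar bounds $\tfrac{6}{7}p+\tfrac{8}{7}$ and $\tfrac{6}{7}p+\tfrac{9}{7}$ come from. A secondary, though less surprising, difficulty is that the classification splits naturally by the sign and size of $p-n$ because the local algebra governing $\mathcal{A}$-equivalence behaves very differently in the equidimensional, positive, and negative codimension regimes, which is the structural reason the theorem is stated as a case list rather than a single inequality.
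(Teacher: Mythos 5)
There is nothing in the paper to compare your argument against: \cref{thm:mather} is quoted as background with the citation \cite{Mather1971} and is not proved (or even sketched) in this paper, whose own contribution is the non-compact case (\cref{thm:main}). Measured against Mather's original argument, your outline does capture the right architecture: reduce stability to infinitesimal stability (Mather II) and to stability of multigerms over fibers with at most $p+1$ points (Mather V), obtain density by multi-jet transversality when the set of non-stable jets has codimension greater than $n$, and obtain failure of density from a non-stable, modular stratum of codimension at most $n$ which an open set of mappings is forced to meet.

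As a proof, however, it has a genuine gap, and it is the one you flag yourself and then set aside: nothing in your argument produces the inequalities (1)--(5). The entire content of \cite{Mather1971} is the computation of $\sigma(n,p)$, the codimension in $J^r(n,p)$ (for large $r$) of the set of non-stable $r$-jets, via the classification of contact classes of low codimension by their local algebras, together with the verification that $\sigma(n,p)>n$ exactly under (1)--(5); the thresholds $\tfrac{6}{7}p+\tfrac{8}{7}$, $\tfrac{6}{7}p+\tfrac{9}{7}$, $p<8$, $p<6$, $p<7$ cannot be recovered from the transversality formalism alone, so invoking ``the codimension estimates verified under (1)--(5)'' is circular as written. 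Two further steps are too loose to check: (i) in the ``if'' direction the criterion is not that all non-stable strata of ${}_sJ^r(N,P)$ have codimension $>sn$ (multijets constrained to a common target already account for codimension $(s-1)p$, and Mather's normal-crossings bookkeeping is what limits attention to $s\leq p+1$ and a fixed determinacy order $r$); (ii) in the ``only if'' direction you must exhibit a $\mathcal{K}$-invariant stratum of codimension $\leq n$ consisting entirely of non-stable jets and show that any map whose jet extension meets it transversally is non-stable and remains non-stable under perturbation; observing that distinct parameters in a modular family are $\mathcal{A}$-inequivalent does not by itself rule out stability of a given nearby map, and naming ``umbilic-type'' or ``unimodal'' germs does not supply the codimension count at the borderline pairs, which is again exactly where the numbers in (1)--(5) come from.
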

A dimension pair $(n,p)$ is called \emph{nice} if $(n,p)$ satisfies one of the conditions (1)--(5) in \cref{thm:mather}.
Note that in \cref{thm:mather} if $N$ is compact, then we have $C^\infty_{pr}(N,P)=C^\infty(N,P)$.
Thus, \cref{thm:mather} yields a characterization of the density of stable mappings in $C^\infty(N,P)$ in the case where $N$ is compact.

After that, the case where a source manifold is non-compact was considered by Dimca, and in 1979,
he gave the following result.
\begin{proposition}[\cite{Dimca1979}]\label{thm:dimca}
Let $N$ be a non-compact manifold.
Then, the set consisting of all stable mappings is not dense in $C^\infty(N,\R)$.
Moreover, the set consisting of all infinitesimally stable mappings is not dense in $C^\infty(N,\R)$.
\end{proposition}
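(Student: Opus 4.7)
My plan is to exhibit an explicit Whitney open set in $C^\infty(N,\mathbb{R})$ whose every member has critical values accumulating at a finite point of $\mathbb{R}$, which forces failure of Mather's infinitesimal stability criterion.

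First, using the non-compactness of $N$, I would construct a reference map $f_0 \colon N \to \mathbb{R}$ that is Morse, has infinitely many critical points $\{p_k\}_{k\ge 1}$ escaping to infinity, with pairwise distinct critical values converging to $0$. For instance, take a proper Morse function $F\colon N \to [0,\infty)$ with infinitely many critical points (which can always be arranged on a non-compact manifold by starting from any proper Morse function and inserting small local Morse bumps along a sequence escaping to infinity) and set $f_0 = 1/(1+F)$; the critical points of $f_0$ coincide with those of $F$, the Morse condition is preserved, and the critical values collapse to $0$.

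Next, I would produce a Whitney neighborhood $U$ of $f_0$ determined by a continuous positive $\epsilon\colon N \to (0,\infty)$ chosen so that: (i) on small disjoint coordinate balls $B_k$ around each $p_k$, the Whitney $C^2$-smallness forces every $g \in U$ to have a unique non-degenerate critical point $q_k(g) \in B_k$ (via the implicit function theorem applied to $\nabla g = 0$); (ii) off $\bigcup_k B_k$, one has $\epsilon(x) < \tfrac12|\nabla f_0(x)|$, so $\nabla g$ cannot vanish there. Every $g \in U$ then has critical set exactly $\{q_k(g)\}_k$, discrete and escaping to infinity, with critical values $g(q_k(g))$ still accumulating at $0$.

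Finally, for each $g \in U$ I would verify the failure of infinitesimal stability via Mather's criterion: $g$ is infinitesimally stable iff every $\xi \in C^\infty(N)$ admits a decomposition $\xi = v(g) + W\circ g$ with $v$ a smooth vector field on $N$ and $W \in C^\infty(\mathbb{R})$. Since $\{q_k(g)\}$ is a discrete closed subset of $N$, a partition-of-unity construction produces a smooth $\xi$ with $\xi(q_k(g)) = (-1)^k$; evaluating the hypothetical decomposition at each critical point, where $v(g)$ vanishes, forces $W(g(q_k(g))) = (-1)^k$, contradicting continuity of $W$ at $0$ because $g(q_k(g)) \to 0$. Thus no $g \in U$ is infinitesimally stable. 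I expect the main technical subtlety to lie in the construction of $\epsilon$ in step (ii), since $|\nabla f_0|$ tends to $0$ both at each $p_k$ and at infinity, so the inequality $\epsilon < \tfrac12 |\nabla f_0|$ off $\bigcup_k B_k$ must be realized by patching carefully between the balls $B_k$ and the ends of $N$ while keeping $U$ an honest open set in the Whitney $C^\infty$ topology.
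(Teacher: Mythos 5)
Your treatment of infinitesimal stability is essentially sound and, for the case $P=\mathbb{R}$, more self-contained than the paper's: where the paper deduces non-infinitesimal-stability from non-properness of $g|_{\Sigma}$ by invoking \cite[Proposition~5.1]{Mather1970}, you verify directly that the decomposition $\xi=tg(v)+W\circ g$ fails, by evaluating at critical points (where the $tg$-term vanishes) and using continuity of $W$ at the accumulation point of the critical values. The design differences are also fine: the paper plants explicit quadratic bumps in disjoint charts (so it never needs a global Morse function, which you merely assert can be produced with infinitely many critical points and distinct critical values), and it gets a critical point of every nearby map from $C^1$-closeness alone via Brouwer's fixed point theorem, whereas you ask for $C^2$-control, uniqueness and nondegeneracy via the implicit function theorem --- more than your final argument uses, but harmless. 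The technical points you flag (the construction of $\varepsilon$, the uniform inverse-function argument) are routine.

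The genuine gap is that the statement asserts non-density for \emph{both} notions: the set of stable mappings (openness of the $\mathcal{A}$-class) and the set of infinitesimally stable mappings. For non-proper mappings these notions are not interchangeable, and your argument rules out only infinitesimal stability on $U$; nothing in it prevents some $g\in U$ from having an open $\mathcal{A}$-class. The paper devotes the entire second half of its proof to exactly this: given $g\in\mathcal{U}$ and an arbitrary neighborhood of $g$, it perturbs $g$ by constants through the bump functions to obtain a map $h$ in that neighborhood with $p+1$ critical points sharing one critical value, hence not stable. Moreover, that mechanism does not transfer to your construction as it stands: in the paper the admissible perturbation radius $r'_{\alpha}$ at a chosen critical point is fixed \emph{first}, and only then is another critical point found whose value lies within $r'_{\alpha}/p$ of the first --- this is precisely where the density of the critical values (the enumeration of $\mathbb{Q}^p$) is used. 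In your design the critical values merely accumulate at $0$, so after $r'$ is fixed (by an adversarially chosen neighborhood of $g$) there need not be any other critical value within reach, and the "equalize two critical values by small constant shifts'' argument can be blocked. So the stability half is not a routine addendum: either redesign $f_0$ so that its critical values are dense in an interval and run a perturbation argument like the paper's Lemmas 3 and 4, or accept that your proof establishes only the infinitesimally stable reading of the proposition.
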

For the definition of infinitesimal stability, which is defined by Mather in \cite{Mather1969}, see \cref{sec:preparation} (the definition of stability is also reviewed in this section).
The main purpose of this paper is to give a rigorous proof of the following main theorem.
\begin{theorem}\label{thm:main}
Let $N$ be a non-compact manifold, and $P$ a manifold.
Then, the set consisting of all stable mappings is not dense in $C^\infty(N,P)$.
Moreover, the set consisting of all infinitesimally stable mappings is not dense in $C^\infty(N,P)$.
\end{theorem}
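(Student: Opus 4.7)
The strategy is to extend Dimca's construction (\cref{thm:dimca}) from $P=\R$ to an arbitrary target $P$. Dimca's obstruction for $P=\R$ is that a function $f_0:N\to\R$ with an infinite sequence of non-degenerate critical values accumulating at a finite real number fails to be infinitesimally stable, and this failure is preserved under small Whitney perturbations because non-degenerate critical points persist locally. For general $P$, I would replace ``accumulation of critical values in $\R$'' by ``accumulation in $P$ of points $g(x_k)$ at which $\operatorname{coker} dg(x_k)$ is nontrivial'', and phrase the obstruction using the cokernel direction in $T_{g(x_k)}P$.

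Since $N$ is non-compact, first choose a sequence $\{x_k\}\subset N$ with no limit point, pairwise disjoint compact coordinate balls $B_k\ni x_k$ escaping to infinity, a coordinate chart $(V,\varphi)$ on $P$ centered at a point $q_*\in P$, and a sequence $\{q_k\}\subset V$ of distinct points converging to $q_*$. Build $f_0\in C^\infty(N,P)$ with $f_0(x_k)=q_k$ and $f_0(B_k)\subset V$, arranged so that $\operatorname{coker} df_0(x_k)$ is nontrivial for each $k$. When $n<p$ this is automatic since $\rank df_0\le n<p$. When $n\ge p$, take each $x_k$ to be a fold point of $f_0$, so that $j^1f_0$ meets the Thom--Boardman stratum $\Sigma^1\subset J^1(N,P)$ of codimension $n-p+1$ transversally at $x_k$. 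Then choose a Whitney neighborhood $\mathcal{W}$ of $f_0$ whose tolerance function $\epsilon:N\to(0,\infty)$ decays rapidly on $\bigcup_k B_k$. For every $g\in\mathcal{W}$, the implicit function theorem supplies $x_k'\in B_k$ satisfying the appropriate rank condition ($x_k'=x_k$ in the case $n<p$; a perturbed fold point in the case $n\ge p$), with $g(x_k')\to q_*$ and $\rank dg(x_k')$ constant in $k$.

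Suppose for contradiction that some $g\in\mathcal{W}$ is infinitesimally stable. Then for every smooth section $\xi$ of $g^*TP$ there exist vector fields $v\in\Gamma(TN)$ and $w\in\Gamma(TP)$ with $\xi=dg\circ v+w\circ g$. Evaluating at $x_k'$ and projecting modulo $\img dg(x_k')\subset T_{g(x_k')}P$ yields $[\xi(x_k')]=[w(g(x_k'))]$. Passing to a subsequence and using compactness of the Grassmannian, one arranges $\img dg(x_k')\to I_*\subset T_{q_*}P$ in the $\varphi$-chart, so continuity of $w$ forces the right-hand sides to converge to $[w(q_*)]$ modulo $I_*$, and hence $[\xi(x_k')]$ must also converge. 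Since $\{x_k'\}$ is a closed discrete subset of $N$, a partition-of-unity construction produces a smooth section $\xi$ whose values at $x_k'$ project, in a fixed complement of $I_*$, to an alternating sequence of two distinct vectors, contradicting this forced convergence. The principal obstacle in executing this plan is the persistence step: one must uniformly guarantee across all dimension pairs $(n,p)$ that a suitable singularity of $f_0$ at each $x_k$ yields a nontrivial cokernel that survives under small $C^r$ perturbations, which is automatic for $n<p$ (immersions) and for $n\ge p$ relies on the transversality of $j^1f_0$ with $\Sigma^1$ together with a Grassmannian-convergence argument that must be carried out carefully in the chosen chart.
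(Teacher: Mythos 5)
You have only attacked the parenthetical half of the statement. Your open set $\mathcal{W}$ is arranged so that no $g\in\mathcal{W}$ is \emph{infinitesimally} stable, but the theorem (and the characterization \cref{thm:combine} it feeds) also asserts non-density of \emph{stable} mappings, i.e.\ mappings whose $\mathcal{A}$-equivalence class is open. For a non-compact source these notions cannot be interchanged: Mather's implication ``infinitesimally stable $\Rightarrow$ stable'' is proved for proper mappings, and the converse uses compactness of the source, so ruling out infinitesimal stability on an open set does not rule out stability there. The paper devotes the entire second half of its proof to this point: for each $g$ in its open set $\mathcal{U}$ and each basic neighborhood $\{h\mid j^kh(N)\subset O'\}$ of $g$, it constructs (\cref{thm:contain}, \cref{thm:summary}) a perturbation $h$ of $g$, supported in finitely many bump regions, having $p+1$ critical points with a common critical value; such an $h$ is not stable, so no neighborhood of $g$ lies in the $\mathcal{A}$-class of $g$. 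This is also why the paper's model map is built so that the critical values enumerate $\Q^p$, dense in a chart $V$ (\cref{thm:u}), rather than accumulating at a single point as in your construction: the density is what lets one match $p+1$ critical values by constant shifts small enough to stay inside an arbitrary prescribed jet-neighborhood. Your setup could probably be adapted to such an argument, but as written the stability half of the theorem is simply not proved.

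The persistence step you yourself flag as the principal obstacle is also not yet an argument when $n\geq p$ and $p\geq 2$: the fold locus $(j^1f_0)^{-1}(\Sigma^1)$ has dimension $p-1>0$, so there is no isolated transverse intersection point to which the implicit function theorem applies directly; one needs, say, a slice through $x_k$ transverse to the fold locus together with a local degree (Brouwer) argument applied to defining functions of $\Sigma^1$ composed with $j^1g$, using closeness of $j^1g$ to $j^1f_0$ on the compact balls. The paper sidesteps this machinery entirely by choosing the local model with last component $\frac{1}{2}\sum_{i=1}^n x_i^2+a$ and applying Brouwer's fixed point theorem to the gradient of that single component (\cref{thm:critical}); only first-order closeness of one coordinate function is required, which is exactly what a $J^1$-neighborhood controls. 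On the positive side, your direct contradiction with the infinitesimal-stability equation (alternating section $\xi$, Grassmannian limit of $\img dg(x_k')$ along a constant-rank subsequence, continuity of $w$ at $q_*$) is essentially sound once the critical points $x_k'$ with $g(x_k')\to q_*$ are secured, and it would give a self-contained alternative to the paper's appeal to Mather's properness criterion for $g|_\Sigma$ --- but it delivers only the infinitesimal half of the statement.
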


As a corollary of Mather's theorem (\cref{thm:mather}) and \cref{thm:main}, we easily obtain a characterization of the density of stable mappings in $C^\infty(N,P)$ in the case where $N$ is not necessarily compact as follows:
\begin{corollary}\label{thm:combine}
Let $N$ and $P$ be manifolds of dimensions $n$ and $p$, respectively.
Then, the set consisting of all stable mappings is dense in $C^\infty(N,P)$ if and only if $N$ is compact and $(n,p)$ is nice.
\end{corollary}

The remainder of this paper is organized as follows.
In \cref{sec:preparation}, we prepare some definitions and notations, and give a lemma for the proof of the main theorem (\cref{thm:main}).
In \cref{sec:proof}, we show \cref{thm:main}.

\section{Preliminaries}\label{sec:preparation}
In this section, we prepare some definitions and notations, and give a lemma (\cref{thm:critical}) for the proof of \cref{thm:main}. 
Moreover, as a supplement, we give a remark on the stability and the infinitesimal stability.

Let $N$ and $P$ be manifolds.
For given mappings $f, g\in C^\infty(N,P)$, 
we say that $f$ is \emph{$\mathcal{A}$-equivalent} to $g$ 
if there exist diffeomorphisms $\Phi:N\to N$ and 
$\Psi:P\to P$ such that $f=\Psi \circ g \circ \Phi^{-1}$.
Let $f:N\to P$ be a mapping.
We say that $f$ is \emph{stable} if the $\mathcal{A}$-equivalence class of $f$ is open in $C^\infty(N,P)$. 
A mapping $\xi:N\to TP$ such that $\Pi \circ \xi=f$ is called a \emph{vector field along $f$}, 
where $TP$ is the tangent bundle of $P$ and $\Pi : TP\to P$ is the canonical projection. 
Let $\theta(f)$ be the set consisting of all vector fields along $f$. 
Set $\theta(N)=\theta(\id_N)$ and $\theta(P)=\theta(\id_P)$, 
where $\id_N:N\to N$ and $\id_P:P\to P$ 
are the identify mappings. 
Following Mather, let $tf:\theta(N)\to \theta(f)$ (resp., $\omega f:\theta(P)\to \theta(f)$) be the mapping  defined by $tf(\xi)=Tf \circ \xi$ (resp., $\omega f(\eta) =\eta \circ f$), 
where $Tf:TN\to TP$ is the derivative mapping of $f$.
Then, as in \cite{Mather1969}, $f$ is said to be \emph{infinitesimally stable} 
if 
\begin{align*}
tf(\theta(N))+\omega f(\theta(P))=\theta(f).
\end{align*}

Let $N$ and $P$ be manifolds, and let $f : N\to P$ be a mapping.
A point $q\in N$ is called a \emph{critical point} of $f$ if $\rank df_q<\dim P$.
We say that a point of $P$ is a \emph{critical value} if it is the image of a critical point.

In what follows, for a given positive integer $m$, we denote the origin $(0\ld 0)$ of $\R^m$ by $0$, the Euclidean norm of $x\in \R^m$ by $\|x\|$, and the $m$-dimensional open ball with center $x\in \R^m$ and radius $r>0$ by $B^m(x,r)$, that is,
\begin{align*}
B^m(x,r)=\set{x'\in \R^m|\|x-x'\|<r}.
\end{align*}
For a set (resp. a topological space) $X$ and a subset $A$ of $X$, we denote the complement of $A$ (resp. the closure of $A$) by $A^c$ (resp. $\overline{A}$).
We denote the set of all positive integers by $\N$.
\begin{lemma}\label{thm:critical}
Let $f=(f_1\ld f_p):B^n(0,r)\to \R^p$ $(r>0)$ be a mapping such that 
\begin{align*}
    f_p(x)=\frac{1}{2}\sum_{i=1}^nx_i^2+a,
\end{align*} where $a$ is a real number and $x=(x_1\ld x_n)$.
If $g=(g_1\ld g_p):B^n(0,r)\to \R^p$ satisfies that 
\begin{align}\label{eq:d0}
\sqrt{\sum_{i=1}^n\left(\frac{\partial f_p}{\partial x_i}(x)-
    \frac{\partial g_p}{\partial x_i}(x)\right)^2}
    <\frac{r}{2}
\end{align}
for any $x\in B^n(0,r)$, then there exists a critical point of $g$ in $B^n(0,r)$.
\end{lemma}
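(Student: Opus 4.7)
The plan is to reduce the assertion to a fixed-point problem for the gradient of the last component and then invoke Brouwer's theorem. First, I would observe that any point $x_0 \in B^n(0,r)$ at which $\nabla g_p(x_0) = 0$ is automatically a critical point of $g$: the last row of the Jacobian matrix $dg_{x_0}$ is then zero, so $\rank dg_{x_0} \le p - 1 < p$. Thus it suffices to produce a zero of $\nabla g_p$ inside $B^n(0,r)$.

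Next, I would compute $\nabla f_p(x) = (x_1, \ldots, x_n) = x$, so that hypothesis \eqref{eq:d0} becomes $\|x - \nabla g_p(x)\| < r/2$ for every $x \in B^n(0,r)$. Define the continuous map $h : B^n(0,r) \to \R^n$ by $h(x) = x - \nabla g_p(x)$, and note that the equation $h(x) = x$ is equivalent to $\nabla g_p(x) = 0$. Since $\|h(x)\| < r/2$ throughout $B^n(0,r)$, the restriction of $h$ to the closed ball $\overline{B^n(0,r/2)} \subset B^n(0,r)$ is a continuous self-map of $\overline{B^n(0,r/2)}$. Brouwer's fixed point theorem then yields $x_0 \in \overline{B^n(0,r/2)} \subset B^n(0,r)$ with $h(x_0) = x_0$, i.e.\ $\nabla g_p(x_0) = 0$, giving the desired critical point.

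There is no real obstacle in this argument once the reduction is spotted; the only subtlety is selecting a closed invariant ball. The strict inequality $<r/2$ in \eqref{eq:d0} is what guarantees that $h$ sends $\overline{B^n(0,r/2)}$ into itself and, at the same time, places the fixed point strictly inside the open ball $B^n(0,r)$, as required.
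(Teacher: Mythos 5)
Your proof is correct and follows essentially the same route as the paper: rewrite hypothesis \eqref{eq:d0} as $\norm{x-\nabla g_p(x)}<r/2$, restrict the map $x\mapsto x-\nabla g_p(x)$ to the closed ball $\overline{B^n(0,r/2)}$, and apply Brouwer's fixed point theorem to get a zero of $\nabla g_p$, hence a critical point of $g$. The only (cosmetic) difference is that you handle all dimensions uniformly via the vanishing last row of the Jacobian, whereas the paper first disposes of the case $n<p$ separately.
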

\begin{proof}[Proof of \cref{thm:critical}]
Since in the case $n<p$ any point in $B^n(0,r)$ is a critical point of $g$, it is sufficient to consider the case $n\geq p$.
Let $D:B^n(0,r)\to \R^n$ be the mapping given by 
\begin{align*}
D(x):=&\left(\frac{\partial f_p}{\partial x_1}(x)-
    \frac{\partial g_p}{\partial x_1}(x)\ld 
    \frac{\partial f_p}{\partial x_n}(x)-
    \frac{\partial g_p}{\partial x_n}(x)
    \right)
    \\ 
    =&\left(x_1-
    \frac{\partial g_p}{\partial x_1}(x)\ld 
    x_n-
    \frac{\partial g_p}{\partial x_n}(x)
    \right).
\end{align*}
For simplicity, set $K=\overline{B^n(0,\frac{r}{2})}$.
Since $\norm{D(x)}<\frac{r}{2}$ for any $x\in K$ by \cref{eq:d0}, we can define the restriction $D|_K:K\to K$.
Since $D|_K$ is continuous, there exists a point $x_0\in  K$ such that $D|_K(x_0)=x_0$ by Brouwer’s fixed point theorem.
Thus, it follows that $\frac{\partial g_p}{\partial x_i}(x_0)=0$ for any $i\in \set{1\ld n}$, which implies that $x_0$ is a critical point of $g$.
\end{proof}

\begin{remark}\label{rem:exp}
As in \cite{Mather1970}, for a proper mapping, the stability and the infinitesimal stability are equivalent conditions.
On the other hand, note that in general, they are not equivalent conditions as follows.
First, Mather gives a non-proper mapping which is infinitesimally stable but not stable in \cite[p. 267]{Mather1969}.
Moreover, the non-proper function $f:\R\to \R$ defined by $f(x)=e^{-x}\sin x$ is stable but not infinitesimally stable\footnote{In \cite{Dimca1979}, it is shown that a given function $g:\R\to \R$ is stable if and only if $g$ is a Morse function (i.e., any critical point of $g$ is nondegenerate) with distinct critical values satisfying that $\Delta$ is discrete, $\lim_{x\to \infty}g(x)\not\in \Delta$ if the limit exists and $\lim_{x\to -\infty}g(x)\not\in \Delta$ if the limit exists, where $\Delta$ is the set of all critical values of $g$. 
The stability of $f(x)=e^{-x}\sin x$ follows from this result.
On the other hand, since $f|_\Sigma:\Sigma\to \R$ is not proper, $f$ is not infinitesimally stable by \cite[Proposition~5.1]{Mather1970}, where $\Sigma$ is the set of all critical points of $f$.}.
\end{remark}

\section{Proof of the main theorem}\label{sec:proof}
The proof is separated by four steps as follows:
In STEP~1, we construct the $C^\infty$ mapping $f:N\to P$ defined by \eqref{eq:f}.
In STEP~2, we construct the open neighborhood $\mathcal{U}$ of $f$ in $C^\infty(N,P)$ defined by \eqref{eq:u}, and we give a lemma on properties of a mapping in $\mathcal{U}$ (see \cref{thm:u}).
In STEP~3, we show that any mapping in $\mathcal{U}$ is not infinitesimally stable.
Finally, in STEP~4, we prove that any mapping in $\mathcal{U}$ is not stable by preparing two lemmas (\cref{thm:contain,thm:summary}).

\smallskip 
\underline{STEP~1}.
Set $n=\dim N$ and $p=\dim P$, respectively.
By Whitney's embedding theorem, there exist a positive integer $\ell$ and an embedding $F:N\to \R^\ell$ such that $F(N)$ is a closed set of $\R^\ell$.
By taking $\ell$ larger if necessary we can  assume that $F(N)\not=\R^\ell$.
Then, there exists a point $z_0\in \R^\ell\setminus F(N)$.
Since $N$ is non-compact, $F(N)$ is also non-compact.
Thus, $F(N)$ is not bounded, which implies that there exists a sequence $\set{R_\al}_{\al\in \N}$ of positive real numbers and a sequence $\set{z_\al}_{\al\in \N}$ of points in $\R^\ell$ such that 
\begin{itemize}
    \item 
    $R_\al<R_{\al+1}$ for any $\al\in \N$ and $\D\lim_{\al \to \infty}R_\al=\infty$,
    \item
    $z_\al\in F(N)\cap (B^\ell(z_0,R_{\al+1})\setminus \overline{B^\ell(z_0,R_{\al})})$ for any $\al\in \N$.
\end{itemize}
Let $\al$ be any positive integer.
Set $q_\al=F^{-1}(z_\al)$.
Here, note that $$F^{-1}(B^\ell(z_0,R_{\al+1})\setminus \overline{B^\ell(z_0,R_{\al})})$$ is an open neighborhood of $q_\al$.
Then, there exists a coordinate neighborhood $(U_\al,\varphi_\al)$ of $N$ with the following properties:
\begin{itemize}
    \item 
    $\overline{U_\al}$ is compact,
    \item
    $q_\al\in U_\al\subset  F^{-1}(B^\ell(z_0,R_{\al+1})\setminus \overline{B^\ell(z_0,R_{\al})})$,
    \item
    $\varphi_\al(q_\al)=0\in \R^n$.
\end{itemize}
Moreover, there exist an open neighborhood $U_\al'$ of $q_\al$ and $\rho_\al:N\to \R$ such that 
\begin{itemize}
    \item 
    $\overline{U_\al'}\subset U_{\al}$,  
    \item
    $\rho_\al(q)=1$ for any $q\in \overline{U_\al'}$,
    \item
    $\supp \rho_\al\subset U_\al$,
\end{itemize}
where $\supp \rho_\al=\overline{\set{q\in N|\rho_\al(q)\not=0}}$.
Note that $\supp \rho_\al$ is compact since $\overline{U_\al}$ is compact.
Here, by choosing $U_\al'$ smaller for each $\al\in \N$ we can assume that 
\begin{itemize}
    \item 
    $\varphi_\al(U_\al')=B^n(0,r_\al)$,
    \item 
    $\D\lim_{\al \to \infty}r_\al=0$,
\end{itemize}
where each $r_\al$ is a positive real number.

Let $\gamma=(\gamma_1\ld \gamma_p):\N\to \Q^p$ be a bijection, and let $\eta_\al:\varphi_\al(U_\al)\to \R^p$ be the mapping defined by 
\begin{align*}
\eta_\al(x)=
\D\left(\gamma_1(\al)\ld \gamma_{p-1}(\al),\frac{1}{2}\sum_{i=1}^nx_i^2+\gamma_{p}(\al)\right)
\end{align*}
for each $\al\in \N$, where $x=(x_1\ld x_n)$.
Let $(V,\psi)$ be a coordinate neighborhood of $P$ satisfying $\psi(V)=\R^p$.
Since $U_\al\cap U_\beta=\varnothing$ if $\al\not=\beta$, we can define $f:N\to P$ as follows:
\begin{align}\label{eq:f}
f(q)=\left\{ \begin{array}{ll}
\D\psi^{-1}(\rho_\al(q)(\eta_\al\circ \varphi_\al)(q)) & \mbox{if $q\in U_\al$}, 
\\
\\ 
\D\psi^{-1}(0) & \mbox{if $q\not\in \bigcup_{\al\in \N}U_\al$}.
\end{array} \right.
\end{align}
We show that $f$ is of class $C^\infty$.
Let $q\in N$ be any point.
If $q\in \bigcup_{\al\in \N}U_\al$, then by the definition of $f$ it is clearly seen that $f$ is of class $C^\infty$ at $q$.
Thus, we consider the case $q\in (\bigcup_{\al\in \N}U_\al)^c$.
Since $\D\lim_{\al\to \infty}R_\al=\infty$, there exists $\beta \in\N$ such that $q\in F^{-1}(B^\ell(z_0,R_\beta))$.
For simplicity, set
\begin{align*}
A=F^{-1}(B^\ell(z_0,R_\beta))\cap \left(\bigcup_{\al\in \N}\supp \rho_\al\right)^c.
\end{align*}
Note that $q\in A$.
Since $R_\al<R_{\al+1}$ for any $\al\in \N$, we see that $$F^{-1}(B^\ell(z_0,R_\beta))\subset (\supp \rho_\al)^c$$ for any $\al\in \N$ satisfying $\al>\beta$.
Thus, we have 
\begin{align*}
    A&=F^{-1}(B^\ell(z_0,R_\beta))\cap \left(\bigcap_{\al\in \N}(\supp \rho_\al)^c\right)
    \\
    &=F^{-1}(B^\ell(z_0,R_\beta))\cap \left(\bigcap_{\al\leq \beta}(\supp \rho_\al)^c\right),
\end{align*}
which implies that $A$ is an open set of $N$.
Since $\rho_\al |_{A}$ is a constant function with a constant value $0$ for each $\al\in \N$, the mapping $f|_A$ is also constant.
Therefore, $f$ is of class $C^\infty$ at $q$.

\smallskip 
\underline{STEP~2}.
In this step, we construct the open neighborhood $\mathcal{U}$ of $f$ in $C^\infty(N,P)$ defined by \eqref{eq:u}, and we give a lemma on properties of a mapping in $\mathcal{U}$.
Since $z_0\in \R^\ell\setminus F(N)$, we can define the following continuous function $\delta:N\to \R$:
\begin{align*}
    \delta(q)=\frac{1}{\norm{F(q)-z_0}}.
\end{align*}
Let $\pi:J^1(N,P)\to N\times P$ be the natural projection defined by $\pi(j^1g(q))=(q,g(q))$.
Then, for any $\al\in \N$, set 
\begin{align*}
    O_\al=\set{j^1g(q)\in \pi^{-1}(U_\al\times V)|\mbox{$j^1g(q)$ satisfies \cref{eq:t} and  \eqref{eq:d}}},
\end{align*}where
\begin{align}\label{eq:t}
    &\norm{(\psi\circ f)(q)-(\psi\circ g)(q)}<\delta(q),
    \\
    &\sqrt{\sum_{i=1}^n\left(\frac{\partial (\psi_p\circ f\circ \varphi_\al^{-1})}{\partial x_i}(\varphi_\al(q))-
    \frac{\partial (\psi_p\circ g\circ \varphi_\al^{-1})}{\partial x_i}(\varphi_\al(q))\right)^2}<\frac{r_\al}{2}.
    \label{eq:d}
\end{align}
In \eqref{eq:d}, $\psi_p$ is the $p$-th component of $\psi$.
From \cref{eq:t} and \eqref{eq:d}, it is not hard to see that $O_\al$ is an open set of $J^1(N,P)$.

We show that $\bigcap_{\al\in \N}(\overline{U_\al'})^c$ is an open set of $N$.
Let $q\in \bigcap_{\al\in \N}(\overline{U_\al'})^c$ be any point.
Since $\D\lim_{\al\to \infty}R_\al=\infty$, there exists $\beta\in \N$ such that $q\in F^{-1}(B^\ell(z_0,R_\beta))$.
Since $R_\al<R_{\al+1}$ for any $\al\in \N$, we have $F^{-1}(B^\ell(z_0,R_\beta))\subset (\overline{U_\al'})^c$ for any $\al\in \N$ satisfying $\al>\beta$, which implies that 
\begin{align*}
    F^{-1}(B^\ell(z_0,R_\beta))\cap 
    \left(\bigcap_{\al\leq \beta}(\overline{U_\al'})^c\right)\subset \bigcap_{\al\in \N}(\overline{U_\al'})^c.
\end{align*}
Since the left side of the above expression is an open neighborhood of $q$, it follows that $\bigcap_{\al\in \N}(\overline{U_\al'})^c$ is open.
Thus, since $\pi$ is continuous, 
\begin{align*}
    O:=\left(\bigcup_{\al\in \N}O_\al\right)\cup \pi^{-1}\left(\left(\bigcap_{\al\in \N}\left(\overline{U_\al'}\right)^c\right)\times V\right)
\end{align*}
is open in $J^1(N,P)$. 
Therefore, we can construct the following open set of $C^\infty(N,P)$:
\begin{align}\label{eq:u}
    \mathcal{U}:=\set{g\in C^\infty(N,P)|j^1g(N)\subset O}.
\end{align}
By showing that $j^1f(N)\subset O$, we will prove that $\mathcal{U}\not=\varnothing$.
Let $j^1f(q)$ $(q\in N)$ be any element of $j^1f(N)$.
If there exists $\al \in \N$ such that $q\in U_\al$, we have $j^1f(q)\in O_\al$ $(\subset O)$ since $f(q)\in V$ and $j^1f(q)$ clearly satisfies \cref{eq:t} and \eqref{eq:d}.
In the case where $q\not\in \bigcup_{\al\in \N}U_\al$,  since \begin{align}\label{eq:n}
    N=\left(\bigcup_{\al\in \N}U_\al\right) 
    \cup
    \left(\bigcap_{\al\in \N}\left(\overline{U_\al'}\right)^c\right),
\end{align}
it must follow that $q\in \bigcap_{\al\in \N}\left(\overline{U_\al'}\right)^c$.
Therefore, since $f(q)\in V$, we obtain 
\begin{align*}
    j^1f(q)\in \pi^{-1}\left(\left(\bigcap_{\al\in \N}(\overline{U_\al'})^c\right)\times V\right)\ (\subset O).
\end{align*}
Hence, we have $\mathcal{U}\not=\varnothing$.
We give the following lemma on properties of a mapping in $\mathcal{U}$. 
\begin{lemma}\label{thm:u}
For any mapping $g\in \mathcal{U}$, we have $g(N)\subset V$ and there exists a sequence $\set{q_\al'}_{\al\in \N}$ of points in $N$ with the following properties.
\begin{enumerate}[$(1)$]
    \item \label{thm:u_critical}
    For each $\al \in \N$, $q_\al'$ is a critical point of $g$ in $U_\al'$.
    \item \label{thm:u_dense}
    The set $\set{g(q_\al')|\al\in\N}$ is dense in $V$.
\end{enumerate}
\end{lemma}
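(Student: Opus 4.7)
The plan is to verify $g(N)\subset V$ first, then items $(1)$ and $(2)$ in turn, using \cref{thm:critical} to produce the critical points and the density of $\Q^p$ in $\R^p$ to arrange $(2)$. The inclusion $g(N)\subset V$ follows directly from the structure of $O$: both of its pieces project under $\pi$ into $N\times V$, since $O_\al\subset \pi^{-1}(U_\al\times V)$ by construction and the second piece is already contained in $\pi^{-1}(N\times V)$. The hypothesis $j^1g(N)\subset O$ thus gives $g(N)\subset V$.

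For $(1)$, I would first observe that for each $\al\in\N$ and every $q\in U_\al'$, the jet $j^1g(q)$ must lie in $O_\al$ specifically. Indeed $q\in\overline{U_\al'}$, so $j^1g(q)$ cannot lie in the second piece of $O$; hence $j^1g(q)\in O_\beta$ for some $\beta$, which forces $q\in U_\beta$, and the pairwise disjointness of the $U_\gamma$ combined with $q\in U_\al$ gives $\beta=\al$. In particular, the derivative bound \eqref{eq:d} holds pointwise throughout $U_\al'$. Since $\rho_\al\equiv 1$ on $U_\al'$, we have $\psi\circ f\circ\varphi_\al^{-1}=\eta_\al$ on $B^n(0,r_\al)=\varphi_\al(U_\al')$, and the $p$-th component of $\eta_\al$ is exactly $\tfrac{1}{2}\sum_{i=1}^n x_i^2+\gamma_p(\al)$. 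This matches the hypothesis of \cref{thm:critical} with $r=r_\al$ and $a=\gamma_p(\al)$, applied to $\eta_\al$ and $\psi\circ g\circ\varphi_\al^{-1}$. The lemma produces a critical point $x_\al\in B^n(0,r_\al)$ of the latter, and the chain rule together with the fact that $\psi$ and $\varphi_\al$ are diffeomorphisms ensures that $q_\al':=\varphi_\al^{-1}(x_\al)\in U_\al'$ is a critical point of $g$.

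For $(2)$, I would quantify how close $\psi(g(q_\al'))$ is to $\gamma(\al)\in\Q^p$. Evaluating \eqref{eq:t} at $q=q_\al'$ and using $\psi\circ f\circ\varphi_\al^{-1}(x_\al)=\eta_\al(x_\al)$ yields $\|\psi(g(q_\al'))-\eta_\al(x_\al)\|<\delta(q_\al')$. Since $q_\al'\in U_\al$ forces $F(q_\al')\in B^\ell(z_0,R_{\al+1})\setminus\overline{B^\ell(z_0,R_\al)}$, we have $\delta(q_\al')\leq 1/R_\al\to 0$; and only the last component of $\eta_\al(x_\al)$ differs from $\gamma(\al)$, with difference of absolute value $\tfrac{1}{2}\|x_\al\|^2\leq r_\al^2/8\to 0$. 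Combining these, $\psi(g(q_\al'))-\gamma(\al)\to 0$ in $\R^p$. Since $\gamma$ is a bijection onto $\Q^p$, every open ball in $\R^p$ contains $\gamma(\al)$ for infinitely many $\al$, and a routine $\tfrac{\ep}{2}$-argument then yields density of $\{\psi(g(q_\al'))\}_{\al\in\N}$ in $\R^p=\psi(V)$, hence of $\{g(q_\al')\}$ in $V$. The main care needed is in showing that on $U_\al'$ the jet lands in the specific piece $O_\al$, so that both inequalities \eqref{eq:t} and \eqref{eq:d} are available; once that reduction is made, the rest is just applying \cref{thm:critical} and estimating error terms.
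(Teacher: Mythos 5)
Your proposal is correct and follows essentially the same route as the paper: the same reduction showing that for $q\in U_\al'$ the jet $j^1g(q)$ must lie in $O_\al$ (so that \eqref{eq:t} and \eqref{eq:d} are available), the same application of \cref{thm:critical} to produce $q_\al'$, and the same error estimates (the $\delta(q_\al')<1/R_\al$ term, the quadratic term of order $r_\al^2$, and density of $\Q^p$ via the bijection $\gamma$) to get density of $\{g(q_\al')\}$ in $V$. The only nitpick is that the bound $\tfrac12\|x_\al\|^2\le r_\al^2/8$ uses the location of the fixed point inside the proof of \cref{thm:critical} rather than its statement, but the weaker bound $r_\al^2/2$ guaranteed by the statement already suffices.
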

\begin{proof}[Proof of \cref{thm:u}]
By the definition of $\mathcal{U}$, we have  $g(N)\subset V$.

Let $\al$ be any positive integer.
Then, we have 
\begin{align*}
    (\psi_p\circ f\circ \varphi_\al^{-1})(x)=\frac{1}{2}\sum_{i=1}^nx_i^2+\gamma_p(\al)
\end{align*}
for any $x=(x_1\ld x_n)\in \varphi_\al(U_\al')$ $(=B^n(0,r_\al))$.
For any $q\in U_\al'$, we obtain $j^1g(q)\in O_\al$ since we have \cref{eq:n} and $U_\al'$ is contained in $U_\al$ which does not intersect with $U_\beta$ $(\beta\not=\al)$.
Hence, it follows that $(\psi_p\circ g\circ \varphi_\al^{-1})|_{B^n(0,r_\al)}$ satisfies \eqref{eq:d}, which implies that there exists a critical point of $(\psi_p\circ g\circ \varphi_\al^{-1})|_{B^n(0,r_\al)}$ in $B^n(0,r_\al)$ by \cref{thm:critical}.
Namely, there exists a critical point of $g$ in $U_\al'$.
We denote its point by $q_\al'$.

Since $\set{q_\al'}_{\al\in \N}$ satisfies \cref{thm:u_critical} by the above argument, it is sufficient to show that the sequence of points also satisfies \cref{thm:u_dense}.
Let $V'$ be any open set of $V$.
We show that $\set{g(q_\al')|\al\in \N}\cap V'\not=\varnothing$.
Then, by choosing $V'$ smaller, we can assume that $\psi(V')=B^p(y_0,\ep)$, where $y_0$ is a point of $\R^p$ and $\ep$ is a positive real number.
Note that for any $\al\in \N$, we have 
\begin{equation}
\begin{split}\label{eq:al_0}
    \norm{(\psi\circ g)(q_\al')-y_0}\leq 
    &\norm{(\psi\circ g)(q_\al')-(\psi\circ f)(q_\al')}
    +  \\
    &\norm{(\psi\circ f)(q_\al')-(\psi\circ f)(q_\al)}
  +
    \norm{(\psi\circ f)(q_\al)-y_0}.
   \end{split}
\end{equation}

Since 
\begin{align*}
    \delta(q_\al')=\frac{1}{\norm{F(q_\al')-z_0}}<\frac{1}{R_\al}
\end{align*}
for any $\al\in \N$ and $\D\lim_{\al\to \infty}R_\al=\infty$, there exists $\al_1\in \N$ such that $\delta(q_\al')<\frac{\ep}{3}$ for any $\al\in \N$ satisfying $\al\geq \al_1$.
Here, note that for any $\al\in \N$, we have 
\begin{align*}
        \norm{(\psi\circ g)(q_\al')-(\psi\circ f)(q_\al')}<\delta(q_\al')
\end{align*}
by \cref{eq:t} since $j^1g(q_\al')\in O_\al$.
Thus, it follows that for any $\al\in \N$,
\begin{align}\label{eq:al_1}
    \al\geq \al_1 \Longrightarrow  \norm{(\psi\circ g)(q_\al')-(\psi\circ f)(q_\al')}<\frac{\ep}{3}.
\end{align}

For any $\al\in \N$, since $q_\al, q_\al'\in U_\al'$, we have 
\begin{align*}
     \norm{(\psi\circ f)(q_\al')-(\psi\circ f)(q_\al)}=
     \norm{\eta_\al(\varphi_\al(q_\al'))-\gamma(\al)}
     = \frac{\norm{\varphi_\al(q_\al')}^2}{2}
     < \frac{r_\al^2}{2}.
\end{align*}
Since $\D\lim_{\al\to \infty}r_\al=0$, there exists $\al_2\in \N$ such that for any $\al\in \N$,
\begin{align}\label{eq:al_2}
    \al\geq \al_2 \Longrightarrow  \norm{(\psi\circ f)(q_\al')-(\psi\circ f)(q_\al)}<\frac{\ep}{3}.
\end{align}

Since $(\psi \circ f)(q_\al)=\gamma(\al)$ for each $\al\in \N$, we have 
\begin{align*}
    \set{(\psi \circ f)(q_\al)|\al\in\N}=\Q^p.
\end{align*}
Hence, there exists $\al_3\in \N$ such that $\al_3>\max\set{\al_1,\al_2}$ and 
\begin{align}\label{eq:al_3}
    \norm{(\psi\circ f)(q_{\al_3})-y_0}<\frac{\ep}{3}.
\end{align}
Thus, we have $\norm{(\psi\circ g)(q_{\al_3}')-y_0}<\ep$ by \eqref{eq:al_0} to \cref{eq:al_3}, which implies that $g(q_{\al_3}')\in V'$.
\end{proof}

\smallskip 
\underline{STEP~3}.
The purpose of this step is to show that any mapping in $\mathcal{U}$ is not infinitesimally stable.
Let $g\in \mathcal{U}$ be any mapping, and let $\Sigma$ be the set consisting of all critical points of $g$.
Set $K=\psi^{-1}(\overline{B^p(0,r)})$, where $r$ is a positive real number.
Note that $K$ is a compact set in $P$.
Then, from \cref{thm:u}~\cref{thm:u_dense}, $(g|_\Sigma)^{-1}(K)$ contains a countable subset of $\set{q_\al'|\al\in \N}$.
Since $F(q_\al')\not\in  \overline{B^\ell(z_0,R_\al)}$ for each $\al \in \N$ and $\D\lim_{\al\to \infty}R_\al=\infty$, the set $F((g|_\Sigma)^{-1}(K))$ is not compact, which implies that $(g|_\Sigma)^{-1}(K)$ is not compact.
Since $g|_\Sigma:\Sigma\to P$ is not proper, $g$ is not infinitesimally stable (note that this fact follows from \cite[Proposition~5.1]{Mather1970}).

\smallskip 
\underline{STEP~4}.
The purpose of this step is to show that any mapping in $\mathcal{U}$ is not stable.
Let $g\in \mathcal{U}$ be an arbitrary mapping, and let $\mathcal{U}_g$ be any open neighborhood of $g$.
Then, there exist a non-negative integer $k$ and an open set $O'$ of $J^k(N,P)$ such that 
\begin{align*}
    g\in \set{h\in C^\infty(N,P)|j^kh(N)\subset O'}\subset \mathcal{U}_g.
\end{align*}
In order to prove that $g$ is not stable, it is sufficient to show that there exists a mapping $h\in C^\infty(N,P)$ satisfying the following properties.
\begin{itemize}
    \item \label{thm:contain_jet} We have $j^kh(N)\subset O'$.
    \item \label{thm:not_stable}There exist $(p+1)$-critical points of $h$ which share the same
critical value.
\end{itemize}
Note that the second property implies that $h$ is not stable.

For any $\al\in \N$ and $c\in \R^p$, let $G_{\al,c}:N\to P$ be the mapping defined by 
\begin{align*}
    G_{\al,c}=\psi^{-1}\circ (\psi \circ g+\rho_\al c).
\end{align*}
\begin{lemma}\label{thm:contain}
Let $\al$ be any positive integer.
Then, there exists a positive real number $r_\al'$ such that $j^kG_{\al,c}(N)\subset O'$ for any $c\in B^p(0,r_\al')$.
\end{lemma}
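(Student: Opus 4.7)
The plan is to split $N$ into the open set where $\rho_\al$ vanishes and the compact set $\supp \rho_\al$, and apply a compactness argument on the latter. First, I would note that $G_{\al,0} = g$: by \cref{thm:u}, $g(N) \subset V$, hence $\psi^{-1}\circ\psi\circ g = g$, and the $c=0$ term contributes nothing. Next, for any $q \notin \supp \rho_\al$ there is an open neighborhood of $q$ on which $\rho_\al$ vanishes identically, so $G_{\al,c}$ coincides with $g$ on that neighborhood for every $c \in \R^p$; in particular $j^k G_{\al,c}(q) = j^k g(q) \in O'$. This reduces the claim to producing $r_\al' > 0$ such that $j^k G_{\al,c}(q) \in O'$ for every $(c,q) \in B^p(0,r_\al') \times \supp \rho_\al$.

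For this reduced problem I would introduce the evaluation map
\begin{equation*}
\Phi : \R^p \times \supp \rho_\al \longrightarrow J^k(N,P), \qquad \Phi(c,q) = j^k G_{\al,c}(q),
\end{equation*}
which is well-defined because $\psi(V) = \R^p$ forces $G_{\al,c}(N) \subset V$ for every $c$. In any chart of $N$ meeting $\supp \rho_\al$ we have $\psi \circ G_{\al,c} = \psi\circ g + \rho_\al c$, so the partial derivatives of $G_{\al,c}$ up to order $k$ depend linearly on $c$ with coefficients that are smooth functions of $q$; hence $\Phi$ is jointly continuous in $(c,q)$.

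By the reduction above, $\Phi(\{0\}\times \supp \rho_\al)\subset O'$, so $\Phi^{-1}(O')$ is an open subset of $\R^p\times \supp \rho_\al$ containing the compact slice $\{0\}\times \supp \rho_\al$. The tube lemma then supplies an open neighborhood $W$ of $0 \in \R^p$ with $W \times \supp \rho_\al \subset \Phi^{-1}(O')$, and any $r_\al' > 0$ with $B^p(0,r_\al') \subset W$ yields the conclusion. The only nontrivial ingredient is the joint continuity of $\Phi$, but this is a routine consequence of the smoothness of $\rho_\al$, $g$, and $\psi$ together with the linear dependence on $c$, so I do not anticipate a real obstacle there; the only substantive idea is recognizing that the tube lemma applies because $\supp \rho_\al$, and not $N$ itself, is compact.
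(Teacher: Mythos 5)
Your proof is correct and follows essentially the same route as the paper: split $N$ into the locus where $G_{\al,c}=g$ off $\supp\rho_\al$ and the compact set $\supp\rho_\al$, use joint continuity of $(q,c)\mapsto j^kG_{\al,c}(q)$, and extract a uniform ball $B^p(0,r_\al')$ by compactness. The only cosmetic difference is that you invoke the tube lemma where the paper carries out the equivalent pointwise-neighborhood and finite-subcover argument by hand.
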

\begin{proof}[Proof of \cref{thm:contain}]
Let $\Gamma_\al:N\times \R^p\to J^k(N,P)$ be the mapping defined by $\Gamma_\al(q,c)=j^kG_{\al,c}(q)$.
For any $q\in \supp \rho_\al$, since $\Gamma_\al(q,0)=j^kg(q)\in O'$ and $\Gamma_\al$ is continuous at $(q,0)$, there exist an open neighborhood $U_q$ of $q\in N$ and an open neighborhood $W_q$ of $0\in \R^p$ such that $\Gamma_\al(U_q\times W_q)\subset O'$.
Since $\set{U_q}_{q\in \supp \rho_\al}$ is an open covering of the compact set $\supp \rho_\al$, there exists a finite subset $S$ of $\supp \rho_\al$ such that $\supp \rho_\al\subset \bigcup_{q\in S}U_q$.
Since $\bigcap_{q\in S}W_q$ is an open neighborhood of $0\in \R^p$, there exists a positive real number $r_\al'$ such that $B^p(0,r_\al')\subset \bigcap_{q\in S}W_q$.

Let $c\in B^p(0,r_\al')$ and $q\in N$ be any points.
If $q\not\in \supp \rho_\al$, then we have $j^kG_{\al,c}(q)\in O'$ since $G_{\al,c}=g$ on the open neighborhood $(\supp \rho_\al)^c$ of $q$.
If $q\in \supp \rho_\al$, then there exists a point $q_0\in S$ such that $q\in U_{q_0}$.
Since $c\in W_{q_0}$, we obtain $j^kG_{\al,c}(q)=\Gamma_\al(q,c)\in O'$.
\end{proof}
Since $g\in \mathcal{U}$, note that there exists a sequence $\set{q_\al'}_{\al\in \N}$ of points in $N$ satisfying \cref{thm:u_critical} and \cref{thm:u_dense} of \cref{thm:u}.
\begin{lemma}\label{thm:summary}
Let $m$ be any positive integer.
Then, there exist $(m+1)$ distinct positive integers $\al_1\ld \al_{m+1}$ and $m$ positive real numbers $r_{\al_1}'\ld r_{\al_m}'$ $(r_{\al_1}'>\cdots >r_{\al_m}')$ such that for any $j\in \set{1\ld m}$,
\begin{enumerate}[$(1)$]
    \item 
    $j^kG_{\al_j,c}(N)\subset O'$ for any $c\in B^p(0,r_{\al_j}')$,
    \item 
    $\norm{(\psi\circ g)(q_{\al_{j+1}}')-(\psi\circ g)(q_{\al_{j}}')}<\D\frac{r_{\al_{j}}'}{p}$.
\end{enumerate}
\end{lemma}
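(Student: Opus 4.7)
The plan is to construct the sequences $(\al_j)_{j=1}^{m+1}$ and $(r_{\al_j}')_{j=1}^{m}$ by induction, alternating two ingredients at each stage: \cref{thm:contain} supplies a positive radius once the current index is fixed, and \cref{thm:u}~\cref{thm:u_dense} supplies the next index once the current radius is known.

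To start, I would pick $\al_1 \in \N$ arbitrarily (say $\al_1 = 1$) and invoke \cref{thm:contain} to obtain a positive real $r_{\al_1}'$ realising condition~(1) for $j=1$. For the inductive step, suppose distinct positive integers $\al_1 \ld \al_j$ and positive reals $r_{\al_1}' > \cdots > r_{\al_j}' > 0$ have been chosen so that conditions~(1) and~(2) hold up to that stage. Since $\psi:V\to \R^p$ is a diffeomorphism, the set
\begin{align*}
W:=\psi^{-1}\left(B^p\left((\psi\circ g)(q_{\al_j}'),\ \frac{r_{\al_j}'}{p}\right)\right)
\end{align*}
is a non-empty open subset of $V$. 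The key sub-observation is that $W$ contains $g(q_\al')$ for infinitely many $\al \in \N$: otherwise, since $W$ is uncountable, I could shrink $W$ to a smaller non-empty open subset of $V$ avoiding those finitely many points, contradicting the density of $\set{g(q_\al')|\al\in\N}$ in $V$ from \cref{thm:u}~\cref{thm:u_dense}. Consequently I can pick $\al_{j+1}\in \N \setminus \set{\al_1 \ld \al_j}$ with $g(q_{\al_{j+1}}') \in W$, which is precisely condition~(2) for index $j$. If $j+1 \leq m$, I then apply \cref{thm:contain} to $\al_{j+1}$ to get some positive $\wt r$ and set $r_{\al_{j+1}}':=\min\set{\wt r,\ r_{\al_j}'/2}$; this preserves both the strict inequality $r_{\al_j}' > r_{\al_{j+1}}'$ and condition~(1) for $j+1$.

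The main obstacle is reconciling the distinctness requirement with the shrinking target ball: as the ball in $\R^p$ becomes smaller, one needs infinitely many of the $g(q_\al')$ to lie inside it in order to avoid reusing one of the previously chosen indices. This is handled by the infinitely-many-hits sub-observation, which uses only density together with the uncountability of non-empty open subsets of $V \cong \R^p$. Once that point is secured, everything else is a mechanical alternation of \cref{thm:contain} and density, and the recursion terminates as soon as $\al_{m+1}$ is selected.
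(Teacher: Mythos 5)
Your proof is correct and follows essentially the same route as the paper: an induction that alternates \cref{thm:contain} (to get each radius $r_{\al_j}'$, shrunk to stay strictly decreasing) with the density statement of \cref{thm:u}~\cref{thm:u_dense} (to pick the next index). The only addition is your explicit justification that infinitely many $\al$ satisfy $g(q_\al')\in W$, which the paper leaves implicit when it selects $\al_{j+1}$ outside the finitely many indices already used; that detail is a correct and welcome clarification, not a divergence in method.
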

\begin{proof}[Proof of \cref{thm:summary}]
We prove the lemma by induction on $m$.

Let $\al_1$ be any positive integer.
By \cref{thm:contain}, there exists a positive real number $r_{\al_1}'$ such that $j^kG_{\al_1,c}(N)\subset O'$ for any $c\in B^p(0,r_{\al_1}')$.
By \cref{thm:u}~\cref{thm:u_dense}, there exists $\al_2 \in \N\setminus\set{\al_1}$ such that $$\norm{(\psi\circ g)(q_{\al_2}')-(\psi\circ g)(q_{\al_1}')}<\frac{r_{\al_1}'}{p}.$$
Hence, the case $m=1$ holds.

We assume that the lemma is true for $m=i$, where $i$ is a positive integer.
By \cref{thm:contain}, there exists a positive real number $r_{\al_{i+1}}'$ $(r_{\al_i}'>r_{\al_{i+1}}')$ such that $j^kG_{\al_{i+1},c}(N)\subset O'$ for any $c\in B^p(0,r_{\al_{i+1}}')$.
By \cref{thm:u}~\cref{thm:u_dense}, there exists $\al_{i+2}\in \N\setminus\set{\al_1\ld \al_{i+1}}$ such that $\norm{(\psi\circ g)(q_{\al_{i+2}}')-(\psi\circ g)(q_{\al_{i+1}}')}<\frac{r_{\al_{i+1}}'}{p}$.
Therefore, the case $m=i+1$ holds.
\end{proof}
For simplicity, set $I=\set{1\ld p}$.
By \cref{thm:summary} in the case $m=p$, there exist $(p+1)$ distinct positive integers $\al_1\ld \al_{p+1}$ and $p$ positive real numbers $r_{\al_1}'\ld r_{\al_p}'$ $(r_{\al_1}'>\cdots >r_{\al_p}')$ such that for any $j\in I$,
\begin{enumerate}[(a)]
    \item \label{thm:summary_contain}
    $j^kG_{\al_j,c}(N)\subset O'$ for any $c\in B^p(0,r_{\al_j}')$,
    \item \label{thm:summary_i}
    $\norm{(\psi\circ g)(q_{\al_{j+1}}')-(\psi\circ g)(q_{\al_{j}}')}<\D\frac{r_{\al_{j}}'}{p}$.
\end{enumerate}
Let $h:N\to P$ be the mapping defined by 
\begin{align*}
    h=\psi^{-1}\circ \left(\psi\circ g+\sum_{i=1}^p\rho_{\al_i}c_i\right),
\end{align*}
where $c_i=(\psi\circ g)(q_{\al_{p+1}}')-(\psi\circ g)(q_{\al_i}')\in \R^p$.

First, we show that $j^kh(N)\subset O'$.
Let $q\in N$ be an arbitrary point.
In the case where $q$ is an element of $(\bigcup_{j=1}^p\supp \rho_{\al_j})^c$, since $h=g$ on the open neighborhood $(\bigcup_{j=1}^p\supp \rho_{\al_j})^c$ of $q$, we have $j^kh(q)=j^kg(q)\in O'$.
We consider the case where there exists $j\in I$ such that $q\in \supp \rho_{\al_j}$.
Since $\supp \rho_{\al_j}\subset \bigcap_{i\in I\setminus \set{j}}(\supp \rho_{\al_i})^c$ and $h=G_{\al_j,c_j}$ on the open neighborhood  $\bigcap_{i\in I\setminus \set{j}}(\supp \rho_{\al_i})^c$ of $q$, we have $j^kh(q)=j^kG_{\al_j,c_j}(q)$.
Moreover, since 
\begin{equation}
\begin{split}\label{eq:c}
    \norm{c_j}&=\norm{(\psi\circ g)(q_{\al_{p+1}}')-(\psi\circ g)(q_{\al_{j}}')}
    \\
    &\leq\sum_{i=j}^p\norm{(\psi\circ g)(q_{\al_{i+1}}')-(\psi\circ g)(q_{\al_{i}}')}
    \\
    &<\sum_{i=j}^p\frac{r_{\al_{i}}'}{p}
    \\
    &\leq r_{\al_{j}}',
\end{split}
\end{equation}
we have $c_j\in B^p(0,r_{\al_j}')$.
Note that the last two inequalities in \cref{eq:c} follow from  \cref{thm:summary_i} and the fact that $r_{\al_j}'>\cdots >r_{\al_p}'$, respectively.
Thus, we obtain $j^kG_{\al_j,c_j}(q)\in O'$ by \cref{thm:summary_contain}, which implies that $j^kh(q)\in O'$.

Finally, we show that there exist $(p+1)$-critical points of $h$ which share the same critical value.
For any $i,j\in I$, since $\rho_{\al_i}(q_{\al_j}')=\delta_{ij}$ and $\rho_{\al_i}(q_{\al_{p+1}}')=0$, we obtain{\small
\begin{align*}
    \left(\psi\circ g+\sum_{i=1}^p\rho_{\al_i}c_i\right)(q_{\al_j}')
    =(\psi\circ g)(q_{\al_j}')+c_j
    =(\psi\circ g)(q_{\al_{p+1}}')
    =(\psi\circ h)(q_{\al_{p+1}}'),
\end{align*}}where $\delta_{ij}$ is the Kronecker delta.
Thus, we have $h(q_{\al_1}')=\cdots= h(q_{\al_{p+1}}')$.
Moreover, for any $j\in I$, the point $q_{\al_j}'$  (resp., $q_{\al_{p+1}}'$) is a critical point of $h$ since $h=\psi^{-1}\circ (\psi\circ g+c_j)$ on an open neighborhood of $q_{\al_j}'$ (resp., $h=g$ on an open neighborhood of $q_{\al_{p+1}}'$).
Namely, $q_{\al_{1}}'\ld q_{\al_{p+1}}'$ share the same critical value of $h$.
\QED

\section*{Acknowledgements}
The author is most grateful to the anonymous reviewers for carefully reading the first manuscript of this paper and for giving invaluable suggestions.
The author would like to thank Kenta Hayano and Takashi Nishimura for helpful discussions. 
This work was supported by JSPS KAKENHI Grant Number JP21K13786.

\end{document}